\documentclass[12 pt]{article}
\usepackage{authblk, amsmath, amsthm, amssymb, setspace, mathtools, graphics, graphicx, caption,epstopdf}
\usepackage{subfigure, subfig, float,booktabs,dcolumn,pifont}

\title{Pairwise approximation for \textit{SIR} type network epidemics with non-Markovian recovery}

\author{
	G. R\"ost$^{1}$, Z. Vizi$^{1}$ and I.Z. Kiss$^{2}$}

\affil{$^{1}$Bolyai Institute, University of Szeged, 
	Aradi v\'ertan\'uk tere 1, 
	Szeged 6720, Hungary\\
	$^{2}$School of Mathematical and Physical Sciences, 
	Department of Mathematics,
	University of Sussex,
	Falmer, Brighton BN1 9QH, UK\\
}

\setlength{\parindent}{10pt}
\begin{document}
\maketitle
\newtheorem{lemma}{Lemma}
\newtheorem{corollary}{Corollary}
\newtheorem{theorem}{Theorem}
\newcommand{\defeq}{\vcentcolon=}
\newcommand{\eqdef}{=\vcentcolon}

\begin{abstract}
We present the generalised mean-field and pairwise models for non-Markovian epidemics on networks with arbitrary recovery time distributions. First we consider a hyperbolic system, where the population of infective nodes and links are structured by age since infection. By solving the partial differential equations, the model is transformed into a system of integro-differential equations, which is analysed both from a mathematical and numerical point of view. We analytically study the asymptotic behaviour of the generalised model and provide a rigorous mathematical proof of the conjecture on the functional form of the final epidemic size and pairwise reproduction number. As an illustration of the applicability of the general model we recover known results for the exponentially distributed and fixed recovery time cases and obtain new pairwise models with gamma and uniformly distributed infectious period. The general framework that we proposed shows a more complete picture of the impact of non-Markovian recovery on network epidemics.
\end{abstract}
{\it Keywords}: network epidemics, integro-differential equation, non-Markovian epidemics, pairwise approximation, reproduction numbers and final epidemic size

\section{Introduction}
It has long been acknowledged that the connectivity pattern between individuals in a population is an important factor in determining the properties of a disease spreading process 
\cite{newman2002spread,newman2003structure,keeling2005networks,boccaletti2006complex}. Using networks to model disease transmission, where individuals are represented as nodes in a network and the connectivity between individuals is represented by links between the nodes, allowed us to capture a high level of detail of many realistic processes and led to more accurate models, especially when compared to classical compartmental models which usually operate on the assumption of homogeneous random mixing.  The most popular node-level model is perhaps the degree-based or heterogeneous mean-field models, with pairwise models offering an explicit treatment of the epidemic process both at node and link level \cite{keeling1999effects,kissrostvizi}. Such and other models of epidemic dynamics of networks (see \cite{pastor2014epidemic} for a review) has led to a much better understanding of the role of contact heterogeneity, assortativity and clustering of contacts, to name just a few important network characteristics or measures.

While networks offer an accurate representation of contact patterns, many network epidemic models only consider Markovian epidemics with both the infectious and recovery process being memoryless. However,  empirical observations show that assuming Markovian infectiousness is a strong simplifying and unrealistic assumption \cite{distrib,lloyd2001realistic}. Unfortunately, the analysis of non-Markovian systems is significantly more challenging and requires deeper theoretical and numerical tools. This paper is motivated by the renewed interest in non-Markovian processes 
\cite{boguna2014simulating, cooper2013non,jo2014analytically,min2013suppression} and aims to extend the pairwise model from Markovian to non-Markovian epidemic dynamics where the infection process remains Markovian but the infectious period is taken from an arbitrary distribution. This paper is structured as follows: in Section \ref{modelderivation}, we present the derivation of pairwise and mean-field models for arbitrary infectious periods, with the most technical part of the calculations presented in the Appendix. Section \ref{generalresults} includes the mathematical analysis of the resulting systems, with focus on the positivity of solutions, associated reproduction numbers and the implicit relation concerning the final epidemic size. In Section \ref{specialcases} we illustrate, that in special cases the resulting system can be reduced to the well-known and previously studied models. In the Appendix we collect the most technical parts of proofs, namely the calculations involved in the model derivation and the algebraic manipulations of the resulting multiple integrals.

\section{Model derivation} \label{modelderivation}
We consider an undirected and unweighted network with $N$ nodes and an average degree $n$. Each node has a state, which can be susceptible ($S$), infected ($I$) or recovered ($R$) in our context. If an infection happens along an $S-I$ link, then the state of susceptible node $S$ changes to $I$. Each infected node has a random infectious period and after it elapses, the node recovers and changes its state to $R$ permanently, which means that the infectious period is the same as the recovery time in our context and recovered nodes remain immune. 

We want to build mean-field and pairwise models for the $SIR$ type epidemic process with Markovian (i.e. exponentially distributed) transmission and general recovery time distribution. We use the notations $[X](t)$, $[XY](t)$ and $[XYZ](t)$ to express the expected number of nodes in state $X$, links in state $X-Y$ and triplets in state $X-Y-Z$, respectively, where $X,Y,Z\in\{S,I,R\}$. For the derivation of a self-consistent model at the level of nodes (i.e. for mean-field model), we obtain equations for $\dot{[S]}(t)$ and $\dot{[I]}(t)$, and these depend on the expected number of pairs. For the pairwise model (at the level of links/pairs) equations for $\dot{[SS]}(t)$ and $\dot{[SI]}(t)$ are needed, which in turn depend on triples.

First, let $i(t,a)$ represent the density of infected nodes with respect to the age of infection $a$ at the current time $t$, then $[I](t)=\int_{0}^{\infty}i(t,a)da$. Similarly, $Si(t,a)$ and $ISi(t,a)$ describe the density of $S-i$ links and $I-S-i$ triplets, respectively, where the infected node $i$ has age $a$ at time $t$ and $[SI](t)=\int_{0}^{\infty}Si(t,a)da$, $[ISI](t)=\int_{0}^{\infty}ISi(t,a)da$. We assume that the infection process along $S-I$ links is Markovian with transmission rate $\tau>0$. The recovery part is considered to be non-Markovian, with a cumulative distribution function $F(a)$ and probability density function $f(a)$. We use the associated survival function $\xi(a)=1-F(a)$ and hazard function $h(a)=-\frac{\xi'(a)}{\xi(a)}=\frac{f(a)}{\xi(a)}$. 

Using the notations above, we arrive at the following model
\begin{subequations}
	\begin{align}
	\dot{[S]}(t)&=-\tau [SI](t),\label{eq:eqS}\\
	\left(\frac{\partial}{\partial t}+\frac{\partial}{\partial a}\right)i(t,a)&=-h(a) i(t,a),\label{eq:eqi}\\
	\dot{[SS]}(t)&=-2 \tau [SSI](t)\label{eq:eqSS},\\
	\left(\frac{\partial}{\partial t}+\frac{\partial}{\partial a}\right)Si(t,a)&=-\tau ISi(t,a)-(\tau+h(a))Si(t,a),\label{eq:eqSi}
	\end{align}
\end{subequations}
subject to the boundary conditions
\begin{subequations}
	\begin{align}
	i(t,0)&=\tau [SI](t),\label{eq:initi}\\
	Si(t,0)&=\tau [SSI](t),\label{eq:initSi}
	\end{align}
\end{subequations}
and initial conditions
\begin{subequations}
	\begin{align}
	[S](0)&=[S]_0,[SS](0)=[SS]_0, i(0,a)=\varphi (a),\label{eq:initi2}\\
	Si(0,a)&=\chi(a)\approx\frac{n}{N} [S]_0 i(0,a)=\frac{n}{N} [S]_0 \varphi (a).\label{eq:initSi2}
	\end{align}
\end{subequations}
We shall use the biologically feasible assumption $\lim_{a\rightarrow\infty}\varphi(a)=0$. To break the dependence on higher order moments, we apply the closure approximation formula 
\begin{equation}
\label{eq:closure}
[XSY](t)=\frac{n-1}{n}\frac{[XS](t)[SY](t)}{[S](t)},
\end{equation}
which can be also applied for $ISi(t,a)$ in the form 
\begin{equation}
\label{eq:closure2}
ISi(t,a)=\frac{n-1}{n}\frac{[SI](t)Si(t,a)}{[S](t)}.
\end{equation}
To obtain a self-consistent system for classical network variables $[S]$, $[SS]$, $[I]$ and $[SI]$, further calculations are needed, which are presented in Appendix \ref{appendixmodelderiv}. The resulting pairwise system is the following integro-differential equation:
\begin{subequations}
	\label{eq:closeq}
	\begin{align}
	\dot{[S]}(t)&=-\tau [SI](t) \label{eq:closedeqS}\\
	\dot{[SS]}(t)&=-2\tau \frac{n-1}{n} \frac{[SS](t) [SI](t)}{[S](t)} \label{eq:closedeqSS}\\
	\dot{[I]}(t)&=\tau [SI](t) - \int_{0}^{t} \tau [SI](t-a) f(a) da - \int_{t}^{\infty} \varphi(a-t) \frac{f(a)}{\xi(a-t)} da\label{eq:closedeqI}\\
	\dot{[SI]}(t)&=\tau \frac{n-1}{n}\frac{[SS](t)[SI](t)}{[S](t)}-\tau \frac{n-1}{n}\frac{[SI](t)}{[S](t)}[SI](t)-\tau [SI](t)\nonumber\\
	&-\int_0^t \tau \frac{n-1}{n}\frac{[SS](t-a)[SI](t-a)}{[S](t-a)} e^{-\int_{t-a}^t \tau\frac{n-1}{n}\frac{[SI](s)}{[S](s)}+\tau ds} f(a)da\nonumber \\
	&-\int_t^{\infty}\frac{n}{N} [S]_0 \varphi (a-t) e^{-\int_{0}^t 
		\tau\frac{n-1}{n}\frac{[SI](s)}{[S](s)} +\tau ds} \frac{f(a)}{\xi(a-t)}da. \label{eq:closedeqSI}
	\end{align}
\end{subequations}
From Eq.(\ref{eq:closeq}), the associated mean-field model can be easily deduced by using the closure approximation formula for homogeneous networks (i.e. $n$-regular graphs)
\begin{equation}
\label{eq:closeformmf}
[XY](t)=\frac{n}{N}[X](t)[Y](t),
\end{equation}
thus the node-level system becomes
\begin{subequations}
	\label{eq:closmfeq}
	\begin{align}
	\dot{S}(t)&=-\tau \frac{n}{N}S(t)I(t) \label{eq:closedmfeqS}\\
	\dot{I}(t)&=\tau \frac{n}{N}S(t)I(t) - \int_{0}^{t} \tau \frac{n}{N}S(t-a)I(t-a) f(a) da- \int_{t}^{\infty} \varphi(a-t) \frac{f(a)}{\xi(a-t)} da.\label{eq:closedmfeqI}
	\end{align}
\end{subequations}
In the following, we investigate these systems from mathematical and numerical point of view, focussing on the epidemiologically meaningful properties of the models.

\section{General results}\label{generalresults}

In this section, we explore the most important features of systems (\ref{eq:closeq}) and (\ref{eq:closmfeq}). First, we find a first integral of the pairwise model (\ref{eq:closeq}), which allows us to reduce the dimensionality. We show that the solutions of the models are biologically meaningful, i.e. solutions with nonnegative data remain nonnegative for $t\geq0$. The paramount results of this part are the implicit relations between the reproduction number and the final epidemic size. We summarize the definitions of the associated reproduction numbers referring to \cite{kissrostvizi}, where the basic ($\mathcal{R}_0$) and pairwise ($\mathcal{R}_0^p$) reproduction numbers are precisely introduced for mean-field and pairwise models, respectively.
\subsection{First integral}\label{generalresultsfirstintegral} 
We use (\ref{eq:closedeqS}) and (\ref{eq:closedeqSS}) to find an invariant quantity of the system.
\begin{lemma}
	The function $U(t)=\frac{[SS](t)}{[S]^{2 \frac{n-1}{n}}(t)}$ is a first integral of system (\ref{eq:closeq}).
\end{lemma}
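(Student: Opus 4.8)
The plan is to show directly that $\dot{U}(t)\equiv 0$ along any solution of (\ref{eq:closeq}), so that $U$ stays constant and is therefore a first integral. Since $U$ is a quotient, the natural first step is to differentiate it by the quotient rule (equivalently, to compute its logarithmic derivative), which reduces the claim to a single cancellation identity between $\dot{[S]}$ and $\dot{[SS]}$. Crucially, only the two closed ordinary differential equations (\ref{eq:closedeqS}) and (\ref{eq:closedeqSS}) are needed; the integro-differential equations (\ref{eq:closedeqI}) and (\ref{eq:closedeqSI}) play no role.

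Writing $\alpha=2\frac{n-1}{n}$ for the exponent appearing in the denominator, I would compute
\begin{equation*}
\dot{U}(t)=\frac{\dot{[SS]}(t)}{[S]^{\alpha}(t)}-\alpha\,\frac{[SS](t)\,\dot{[S]}(t)}{[S]^{\alpha+1}(t)}.
\end{equation*}
Substituting the right-hand sides $\dot{[S]}=-\tau[SI]$ from (\ref{eq:closedeqS}) and $\dot{[SS]}=-2\tau\frac{n-1}{n}\frac{[SS][SI]}{[S]}$ from (\ref{eq:closedeqSS}), both terms become scalar multiples of $\frac{[SS](t)[SI](t)}{[S]^{\alpha+1}(t)}$. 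The first contributes the factor $-2\tau\frac{n-1}{n}$, while the second contributes $+\alpha\tau=+2\tau\frac{n-1}{n}$, so the two cancel exactly and $\dot{U}(t)=0$. The specific value $\alpha=2\frac{n-1}{n}$ is precisely what is required to make this cancellation happen, which explains the otherwise unusual exponent in the definition of $U$.

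The only point requiring care is that $U$ be well defined and differentiable, which holds as long as $[S](t)>0$ on the time interval under consideration; this follows from the positivity of solutions (established separately) together with the standing assumption $[S]_0>0$. I do not expect any genuine obstacle here: once the derivative is expanded the argument is essentially a one-line computation, and the result reduces the effective dimension of the pairwise system by expressing $[SS]$ in terms of $[S]$ through the conserved quantity $U$.
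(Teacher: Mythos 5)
Your proof is correct, but it runs in the opposite direction to the paper's. The paper \emph{derives} the invariant: it divides (\ref{eq:closedeqSS}) by (\ref{eq:closedeqS}) to get the separable equation $\frac{d[SS]}{d[S]}=2\frac{n-1}{n}\frac{[SS]}{[S]}$, solves it as $[SS]=K[S]^{2\frac{n-1}{n}}$, and identifies $K=U(0)=\frac{n}{N}[S]_0^{2/n}$ --- the constant that is actually needed afterwards in (\ref{eq:firstintegral}). You instead \emph{verify} the stated invariant by differentiating $U$ and checking the cancellation. Your route is logically a bit cleaner: it avoids dividing by $-\tau[SI]$ (the paper's quotient is formally $0/0$ at any instant where $[SI]=0$) and avoids re-parametrising $[SS]$ as a function of $[S]$ along the trajectory; its price is that the ansatz for $U$ must be given in advance, whereas the paper's computation shows how the exponent is discovered (your closing remark that $\alpha=2\frac{n-1}{n}$ is forced by the cancellation essentially recovers this). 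One caveat: do not justify $[S](t)>0$ by appealing to the positivity lemma, because in the paper that lemma is itself proved \emph{using} this first integral (positivity of $[S]$ is deduced from (\ref{eq:firstintegral}) together with positivity of $[SS]$), so the appeal would be circular. It is also unnecessary: $[S]$ appears in the denominators of the right-hand sides of (\ref{eq:closedeqSS}) and (\ref{eq:closedeqSI}), so along any solution of (\ref{eq:closeq}) one automatically has $[S](t)\neq 0$, and $U$ is well defined and differentiable wherever the solution exists.
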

\begin{proof}
	To see this, let us divide Eq.(\ref{eq:closedeqSS}) by Eq.(\ref{eq:closedeqS}), which gives
	\begin{eqnarray*}
		\frac{d[SS]}{d[S]}=\frac{-2\tau \frac{n-1}{n}\frac{[SS] [SI]}{[S]}}{-\tau [SI]}=2 \frac{n-1}{n} \frac{[SS]}{[S]}.
	\end{eqnarray*}
	Solving this equation, we find $[SS]=K [S]^{2\frac{n-1}{n}}$, where $K$ is a constant, thus $U(t)=\frac{[SS](t)}{[S]^{2\frac{n-1}{n}}(t)}$ is an invariant quantity in the system and its value is 
	\begin{equation*}
	U(0)=K=\frac{[SS](0)}{[S]^{2\frac{n-1}{n}}(0)}=\frac{[SS]_0}{[S]_0^{2\frac{n-1}{n}}}=\frac{n [S]_0 \frac{[S]_0}{N}}{[S]_0^{2\frac{n-1}{n}}}=\frac{n}{N} [S]^{\frac{2}{n}}_0.
	\end{equation*}	
\end{proof}
\noindent Consequently, using this first integral, we obtain
\begin{equation}
\label{eq:firstintegral}
[SS](t)=\frac{n}{N} [S]^{\frac{2}{n}}_0 [S]^{2\frac{n-1}{n}}(t). 
\end{equation}
Applying Eq.(\ref{eq:firstintegral}), we can reduce our pairwise model to a two-dimensional system:
\begin{eqnarray}
\label{eq:hom2dimsys0}
\dot{[S]}(t)&=&-\tau [SI](t),\nonumber \\
\dot{[SI]}(t)&=&\tau \kappa [S]^{\frac{n-2}{n}}(t)[SI](t) -\tau [SI](t)-\tau\frac{n-1}{n} \frac{[SI](t)}{[S](t)}[SI](t)\nonumber \\ 
&& -\int_{0}^{t}\tau \kappa [S]^{\frac{n-2}{n}}(t-a)[SI](t-a) e^{-\int_{t-a}^t \tau\frac{n-1}{n}\frac{[SI](s)}{[S](s)}+\tau ds} f(a) da\nonumber \\
&& -\int_t^{\infty}\frac{n}{N} [S]_0 \varphi (a-t) e^{-\int_{0}^t \tau\frac{n-1}{n}\frac{[SI](s)}{[S](s)}+\tau ds} \frac{f(a)}{\xi(a-t)} da,
\end{eqnarray}
where 
\begin{equation*}
\kappa=\frac{n-1}{N} [S]^{\frac{2}{n}}_0.
\end{equation*}
\subsection{Positivity}\label{generalresultspositivity}
We are interested only in nonnegative solutions of system (\ref{eq:closeq}). The following proposition shows, that the solutions remain nonnegative provided that the initial conditions are nonnegative.
\begin{lemma}
	If initial conditions $[S]_0$, $[SS]_0$ are nonnegative and $\varphi(a)\geq0$ for $a\geq0$, then $[S](t)\geq0$, $[SS](t)\geq0$, $[I](t)\geq0$ and $[SI](t)\geq0$ hold for $t\geq0$.   
\end{lemma}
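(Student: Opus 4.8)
The plan is to exploit the fact that the integro-differential system (\ref{eq:closeq}) is, underneath, the integral (characteristics) representation of the age-structured transport equations (\ref{eq:eqi}) and (\ref{eq:eqSi}), where nonnegativity is transparent. I would first record the two consequences that are essentially free. By the first integral (\ref{eq:firstintegral}), $[SS](t)=\frac{n}{N}[S]_0^{2/n}[S]^{2(n-1)/n}(t)$, so $[SS](t)\ge 0$ as soon as $[S](t)\ge 0$; and for $[I]$, solving (\ref{eq:eqi}) along characteristics with boundary datum $i(t,0)=\tau[SI](t)$ and initial datum $\varphi\ge 0$ gives $i(t,a)=\tau[SI](t-a)\xi(a)$ for $a<t$ and $i(t,a)=\varphi(a-t)\xi(a)/\xi(a-t)$ for $a>t$, whence $[I](t)=\int_0^\infty i(t,a)\,da\ge 0$ the moment $[SI]\ge 0$ is known. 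Thus everything reduces to proving $[SI](t)\ge 0$ and $[S](t)\ge 0$.

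For $[SI]$ I would \emph{not} attempt a direct differential barrier argument on (\ref{eq:closedeqSI}): at a putative first zero $t_0$ of $[SI]$ the three local terms all carry a factor $[SI]$ and vanish, but the two memory integrals are nonpositive, so one only obtains $\dot{[SI]}(t_0)\le 0$, which does not rule out a crossing. The correct tool is the characteristic representation $[SI](t)=\int_0^\infty Si(t,a)\,da$, where integrating (\ref{eq:eqSi}) with the closure (\ref{eq:closure2}) expresses $Si(t,a)$ as a strictly positive exponential factor times either the boundary value $Si(t-a,0)=\tau\frac{n-1}{n}\frac{[SS][SI]}{[S]}(t-a)$ (for $a<t$) or the nonnegative initial value $\chi(a-t)=\frac{n}{N}[S]_0\varphi(a-t)$ (for $a>t$). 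Since $Si(t,a)$ with $a>0$ depends on $[SI]$ only at strictly earlier times $t-a$, nonnegativity propagates forward: on any interval where $[S]>0$ and $[SS]\ge0$, if $[SI]\ge 0$ on $[0,t_0)$ then every boundary value feeding $Si(t_0,\cdot)$ is nonnegative, so $Si(t_0,\cdot)\ge 0$ and hence $[SI](t_0)\ge 0$. The set on which $[SI]\ge 0$ is therefore relatively open and closed, giving $[SI]\ge 0$ throughout.

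The genuine obstacle is keeping $[S]$ strictly positive, since $[S]$ sits in the denominators of (\ref{eq:closedeqSS}) and (\ref{eq:closedeqSI}); without this the kernels in the previous step are undefined, and the argument is a priori valid only on the maximal interval $[0,T^*)$ on which $[S]>0$. Given $[SI]\ge 0$ we have $\dot{[S]}=-\tau[SI]\le 0$, so $[S]$ is nonincreasing, but a merely linear lower bound does not prevent $[S]$ from reaching $0$ in finite time. To close this I would establish the a priori comparison $[SI](t)\le n[S](t)$ by showing that the region $\{[SI]\le n[S]\}$ is forward invariant: on its boundary one computes $\frac{d}{dt}([SI]-n[S])=\dot{[SI]}+n\tau[SI]$, and using $[SS]=\frac{n}{N}[S]_0^{2/n}[S]^{2(n-1)/n}\le n[S]$ together with the nonnegativity of the discarded memory integrals one checks this derivative is $\le 0$ for $n\ge 2$; the initial inclusion $[SI]_0=\frac{n}{N}[S]_0[I]_0\le n[S]_0$ holds because $[I]_0\le N$. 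The bound $[SI]/[S]\le n$ then turns $\dot{[S]}/[S]=-\tau[SI]/[S]$ into $[S](t)\ge [S]_0 e^{-\tau n t}>0$, so $T^*=\infty$ and $[S](t)>0$ for all $t$. With $[S]>0$ and $[SI]\ge 0$ secured on $[0,\infty)$, the representations above immediately deliver $[SS]\ge0$ and $[I]\ge0$, completing the proof. I expect this forward-invariance estimate to be the main technical point.
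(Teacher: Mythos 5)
Your treatment of the four variables follows essentially the same route as the paper's own proof: nonnegativity of $[SS]$ from the first integral (the paper uses the equivalent exponential solution formula of (\ref{eq:closedeqSS})), positivity of $[S]$ via the first integral, and then $[I]\geq 0$ and $[SI]\geq 0$ read off from the characteristic/integral representations (\ref{eq:closedformI}) and (\ref{eq:closedformSI}) by precisely the ``the right-hand side only involves strictly earlier, positive values'' propagation you describe. Where you depart from the paper is your final step: you correctly note that the whole argument lives on the maximal interval where $[S]>0$ (a point the paper passes over silently), and you try to close it by showing $\{[SI]\le n[S]\}$ is forward invariant so that $[S](t)\ge [S]_0e^{-n\tau t}$. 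That instinct is good, but the invariance computation you sketch is wrong, and this is the step you yourself flag as the main technical point.

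At a boundary point $[SI]=n[S]$ one has $\tau\frac{n-1}{n}\frac{[SS][SI]}{[S]}=\tau(n-1)[SS]$ and $\tau\frac{n-1}{n}\frac{[SI]^2}{[S]}+\tau[SI]=n\tau[SI]$, so writing $M\ge 0$ for the two memory integrals in (\ref{eq:closedeqSI}) (which enter with a minus sign),
\begin{equation*}
\frac{d}{dt}\bigl([SI]-n[S]\bigr)=\dot{[SI]}+n\tau[SI]
=\tau(n-1)[SS]-n\tau[SI]-M+n\tau[SI]
=\tau(n-1)[SS]-M .
\end{equation*}
Discarding $M$ gives the upper bound $\tau(n-1)[SS]\ge 0$, and inserting your estimate $[SS]\le n[S]$ only worsens it to $\tau(n-1)n[S]>0$: the inequality points \emph{outward}, so the claimed ``derivative $\le 0$ for $n\ge 2$'' is false. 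The failure is genuine, not cosmetic: near $t=0$ with newborn infecteds and $f(0)=0$ one has $M\approx 0$ while $[SS]>0$, so the vector field really does cross the boundary of $\{[SI]\le n[S]\}$; that set alone is not invariant for the closed dynamics. The step can be repaired by enlarging the functional: on $\{[SS]+[SI]=n[S]\}$ the closure gives $[SSI]+[ISI]=\frac{n-1}{n}\frac{[SI]([SS]+[SI])}{[S]}=(n-1)[SI]$, all local terms cancel identically, and
\begin{equation*}
\frac{d}{dt}\bigl([SS]+[SI]-n[S]\bigr)=-M\le 0 ,
\end{equation*}
so $\{[SS]+[SI]\le n[S]\}$ is forward invariant (and contains the initial data, since $[SS]_0+[SI]_0=\frac{n}{N}[S]_0([S]_0+[I]_0)\le n[S]_0$). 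Combined with $[SS]\ge 0$ this yields $[SI]\le n[S]$, hence your exponential lower bound on $[S]$ and global existence. With that substitution your argument closes, and is in fact more complete than the paper's, which never rules out $[S]$ reaching zero in finite time.
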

\begin{proof}
	It is clear, that $[SS](t)$ remains nonnegative, if the initial condition $[SS](0)$ is nonnegative, because $[SS](t)$ can be expressed from Eq.(\ref{eq:closedeqSS}) in the form
	\begin{equation*}
	[SS](t)=[SS]_0\,e^{-2\tau\frac{n-1}{n}\int_{0}^{t}\frac{[SI](s)}{[S](s)} ds}.
	\end{equation*}
	Moreover, if $[SS]_0$ is positive, then $[SS](t)>0$ for all $t\geq0$. From Eq. (\ref{eq:firstintegral}) we obtain that $[S](t)$ cannot be zero, if $[SS](t)$ is positive for all $t\geq0$, which implies (from continuity of solutions) $[S](t)>0$ for $t\geq0$. From the derivation of system (\ref{eq:closeq}) (see Appendix), we have the following formulae for $[I](t)$ and $[SI](t)$:
	\begin{equation}
	\label{eq:closedformI}
	[I](t)=\int_{0}^{t} \tau [SI](t-a) \xi(a) da + \int_{t}^{\infty} \varphi(a-t) \frac{\xi(a)}{\xi(a-t)} da,
	\end{equation}
	and
	\begin{eqnarray}
	\label{eq:closedformSI}
	[SI](t)&=&\int_0^t \tau \frac{n-1}{n}\frac{[SS](t-a)[SI](t-a)}{[S](t-a)} e^{-\int_{t-a}^t \tau\frac{n-1}{n}\frac{[SI](s)}{[S](s)}+\tau ds} \xi(a) da\nonumber\\
	&&+
	\int_t^{\infty}\frac{n}{N} [S]_0 \varphi (a-t) e^{-\int_{0}^t \tau\frac{n-1}{n}\frac{[SI](s)}{[S](s)}+\tau ds} \frac{\xi(a)}{\xi(a-t)} da. 
	\end{eqnarray}
	It can be seen that $[I](t)$ remains nonnegative if $[SI](t)$ is nonnegative for $t\geq0$. On the other hand, $[SI](t_0)$ cannot be zero for some $t_0\geq0$, because the right-hand side of (\ref{eq:closedformSI}) depends on the $[S](t;t<t_0)$, $[SS](t;t<t_0)$ and $[S](t;t<t_0)$, which are positive, hence $[SI](t)>0$.
\end{proof}
\noindent In the case of the mean-field model (\ref{eq:closmfeq}), the positivity of $S(t)$ is clear. To see the positivity of $I(t)$, we substitute (\ref{eq:closeformmf}) into (\ref{eq:closedformI}), which gives
\begin{equation}
\label{eq:closedmfformI}
I(t)=\int_{0}^{t} \tau\frac{n}{N} S(t-a)I(t-a) \xi(a) da + \int_{t}^{\infty} \varphi(a-t) \frac{\xi(a)}{\xi(a-t)} da.
\end{equation}
Notice that $I(t)$ remains nonnegative if $S(t)$ is nonnegative for $t\geq0$.

\subsection{Reproduction numbers $\mathcal{R}_0$ and $\mathcal{R}_0^p$}\label{generalresultsreproductionnumbers}

To determine the reproduction numbers for our models, we start with the usual interpretation, which specifies $\mathcal{R}_0$ as the number of secondary infections generated by a 'typical' infected individual introduced into a fully susceptible population during its infectious period. In \cite{kissrostvizi} the reproduction numbers are precisely described in both cases: in context of mean-field model, we use the \textit{basic} reproduction number $\mathcal{R}_0$, which is the expected lifetime of an $I$ node multiplied by the number of newly generated $I$ nodes per unit time. On the other hand, the \textit{pairwise} reproduction number $\mathcal{R}_0^p$ is the expected lifetime of an $S-I$ link multiplied by the number of newly generated $S-I$ links per unit time. These definitions above give
\begin{equation}
\label{eq:R0init}
\mathcal{R}_0=\tau [SI]_0 \mathbb{E}(\mathcal{I}),
\end{equation}
where $\mathbb{E}(\mathcal{I})$ denotes the expected value of the random variable $\mathcal{I}$ defined by the infectious period of an infected node, and
\begin{equation}
\label{eq:R0pinit}
\mathcal{R}^p_0=\tau [SSI]_0 \mathbb{E}(Z)=\tau [SSI]_0 \frac{\left(1-\mathcal L[f_\mathcal{I}](\tau)\right)}{\tau},
\end{equation}
where $\mathbb{E}(Z)$ denotes the expected value of the random variable $Z$ defined by the lifetime of an $S-I$ link and $\mathcal L[f_{\mathcal{I}}](\tau)$ denotes the Laplace transform of $f_{\mathcal{I}}$, the density of the recovery process at $\tau$. Applying the mean-field closure assumption (\ref{eq:closeformmf}) for (\ref{eq:R0pinit}), we get 
\begin{equation}
\label{eq:R0}
\mathcal{R}_0=\tau \frac{n}{N} S_0 \mathbb{E}(\mathcal{I}),
\end{equation}
and using the pairwise closure approximation (\ref{eq:closure}) and first integral (\ref{eq:firstintegral}) in (\ref{eq:R0pinit}), we find 
\begin{equation}
\label{eq:R0p}
\mathcal{R}^p_0=\frac{n-1}{N} [S]_0\left(1-\mathcal L[f_\mathcal{I}](\tau)\right).
\end{equation}
We omit the detailed calculations here (see \cite{kissrostvizi}). 
\subsection{Final size relation}\label{generalresultsfinalsize}
In this part, we derive final size relations that allow us to calculate the total number of infected nodes during an epidemic outbreak on the network. We use the notation $s_\infty=\frac{[S]_\infty}{[S]_0}$, where $[S]_\infty = \lim_{t\rightarrow\infty}[S](t)$ and $1-s_{\infty}$ is called the attack rate (the fraction of infected nodes).
\begin{theorem}
	The final size relation associated to the mean-field model (\ref{eq:closmfeq}) is 
	\begin{eqnarray}
	\label{eq:finalsizemftheorem}
	\ln\left(s_\infty\right)=\mathcal{R}_0\left(s_\infty-1\right), 
	\end{eqnarray}
	where the basic reproduction number $\mathcal{R}_0$ is defined in (\ref{eq:R0}).
\end{theorem}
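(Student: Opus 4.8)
The plan is to turn the statement into an integral identity for $I$ and then collapse that integral back onto the net change in $S$. Dividing the susceptible equation (\ref{eq:closedmfeqS}) by $S(t)$ gives $\dot S(t)/S(t)=-\tau\frac{n}{N}I(t)$; since (\ref{eq:closedmfeqS}) together with the positivity already established shows that $S$ is nonincreasing and bounded below, the limit $S_\infty=\lim_{t\to\infty}S(t)$ exists, and integrating over $[0,\infty)$ yields
\begin{equation*}
\ln(s_\infty)=\ln\frac{S_\infty}{S_0}=-\tau\frac{n}{N}\int_0^\infty I(t)\,dt .
\end{equation*}
Comparing this with the target relation and with $\mathcal{R}_0=\tau\frac{n}{N}S_0\mathbb{E}(\mathcal{I})$ from (\ref{eq:R0}), the theorem is equivalent to proving $\int_0^\infty I(t)\,dt=\mathbb{E}(\mathcal{I})\,(S_0-S_\infty)$, which is what I would establish next.

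To evaluate $\int_0^\infty I(t)\,dt$ I would insert the closed form (\ref{eq:closedmfformI}) and split it into a convolution term and an initial-data term. All integrands are nonnegative by positivity, so Tonelli's theorem legitimises every interchange of integration order. In the convolution term I would swap the $t$- and $a$-integrals over the region $\{0\le a\le t\}$ and substitute $u=t-a$, which factorises it as
\begin{equation*}
\int_0^\infty\!\!\int_0^t \tau\frac{n}{N}S(t-a)I(t-a)\xi(a)\,da\,dt=\left(\int_0^\infty \xi(a)\,da\right)\left(\int_0^\infty \tau\frac{n}{N}S(u)I(u)\,du\right).
\end{equation*}
The first factor is $\int_0^\infty\xi(a)\,da=\mathbb{E}(\mathcal{I})$, the mean infectious period; the second telescopes through (\ref{eq:closedmfeqS}), giving $\int_0^\infty\tau\frac{n}{N}S(u)I(u)\,du=-\int_0^\infty\dot S(u)\,du=S_0-S_\infty$. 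Hence this term contributes exactly $\mathbb{E}(\mathcal{I})(S_0-S_\infty)$. As a byproduct, bounding the partial integral $\int_0^T I$ by this quantity shows $\int_0^\infty I<\infty$ and therefore $S_\infty>0$, so that $\ln(s_\infty)$ is finite.

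It then remains to dispose of the initial-data term $\int_0^\infty\!\int_t^\infty\varphi(a-t)\frac{\xi(a)}{\xi(a-t)}\,da\,dt$. Reversing the order of integration and substituting $b=a-t$ reduces it to $\int_0^\infty\frac{\varphi(b)}{\xi(b)}\big(\int_b^\infty\xi(c)\,dc\big)\,db$, which measures the residual infectivity carried by the nodes infected before $t=0$. For the Kermack--McKendrick form of the relation this contribution must be absent, which is precisely the standard small-seed hypothesis: as the initial infected mass $\int_0^\infty\varphi(a)\,da\to 0$ (equivalently $S_0\to N$) the term vanishes, leaving $\int_0^\infty I(t)\,dt=\mathbb{E}(\mathcal{I})(S_0-S_\infty)$. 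Substituting back gives $\ln(s_\infty)=-\tau\frac{n}{N}\mathbb{E}(\mathcal{I})(S_0-S_\infty)=\mathcal{R}_0(s_\infty-1)$.

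I expect the initial-data term to be the main obstacle. The convolution bookkeeping and the convergence of the improper integrals are essentially automatic once positivity is in hand, but the identity $\int_0^\infty I=\mathbb{E}(\mathcal{I})(S_0-S_\infty)$ is exact only in the vanishing-seed limit; for finite initial data it picks up the nonnegative correction computed above, so the clean relation must be read as the limiting statement for an infinitesimal initial infection. The secondary technical care lies in justifying the Fubini--Tonelli interchanges and the $T\to\infty$ limit on the unbounded domain, for which the uniform bound on $\int_0^T I$ is the key ingredient.
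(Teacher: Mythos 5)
Your proposal is correct and follows essentially the same route as the paper's proof: integrating $\dot S/S$ to get $\ln(s_\infty)=-\tau\frac{n}{N}\int_0^\infty I$, substituting the closed form of $I(t)$, interchanging the order of integration to factor out $\int_0^\infty\xi(a)\,da=\mathbb{E}(\mathcal{I})$, and discarding the initial-data term under the small-seed assumption $\varphi(a)\approx 0$. Your treatment is in fact slightly more careful than the paper's, since you justify the interchanges via Tonelli and make explicit that the relation is exact only in the vanishing-seed limit, whereas the paper simply neglects the $\varphi$ term mid-derivation.
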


\begin{proof}
	From (\ref{eq:closedmfeqS}), we obtain
	\begin{equation}
	\label{eq:sinfs0}
	S_\infty-S_0=-\tau \frac{n}{N}\int_0^{\infty} S(u) I(u) du
	\end{equation}
	and
	\begin{equation}
	\label{eq:finbegin}
	\ln\left(\frac{S_\infty}{S_0}\right)=-\tau \frac{n}{N}\int_0^{\infty}I(u) du.
	\end{equation}
	Substituting (\ref{eq:closedmfformI}) into (\ref{eq:finbegin}), we get
	\begin{eqnarray*}
		\ln\left(\frac{S_\infty}{S_0}\right)&=&-\tau \frac{n}{N}\int_0^{\infty}\int_{0}^{u} \tau\frac{n}{N} S(u-a)I(u-a) \xi(a) da du\nonumber \\
		&&- \tau \frac{n}{N}\int_0^{\infty}\int_{u}^{\infty} \varphi(a-u) \frac{\xi(a)}{\xi(a-u)} da du.
	\end{eqnarray*}
	Neglecting the small number of initial infecteds ($\varphi(a)\approx 0$), we obtain
	\begin{eqnarray}
	\label{eq:beforemffs}
	\ln\left(\frac{S_\infty}{S_0}\right)&=&-\left(\tau \frac{n}{N}\right)^2\int_0^{\infty}\int_{0}^{u} S(u-a)I(u-a) \xi(a) da du.
	\end{eqnarray}
	After some algebraic manipulation (for details, see Appendix \ref{appendixfinalsizemftheorem}), we obtain
	\begin{eqnarray}
	\label{eq:aftermffs}	
	\ln\left(\frac{S(\infty)}{S(0)}\right)=\tau \frac{n}{N}\mathbb{E}(\mathcal{I})\left(S(\infty)-S(0)\right), 
	\end{eqnarray}
	where $\mathcal{I}$ denotes the infectious period of an infected node. Therefore, we found
	\begin{equation*}
	\ln\left(s_\infty\right)=\mathcal{R}_0\left(s_\infty-1\right),
	\end{equation*}
	where $\mathcal{R}_0=\tau \frac{n}{N}\mathbb{E}(\mathcal{I})S_0$.
\end{proof}
In the following, we derive the final-size relation for the pairwise system (\ref{eq:closeq}).
\begin{theorem}
	The final size relation associated to the pairwise model (\ref{eq:closeq}) is 
	\begin{eqnarray}
	\label{eq:finalsizetheorem}
	\frac{s_\infty^{\frac{1}{n}}-1}{\frac{1}{n-1}}=\mathcal{R}_0^p
	\left(s_\infty^{\frac{n-1}{n}}-1\right), 
	\end{eqnarray}
	where the pairwise reproduction number $\mathcal{R}_0^p$ is defined in (\ref{eq:R0p}).
\end{theorem}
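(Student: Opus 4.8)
The plan is to mirror the structure of the mean-field proof, but to replace the logarithm by a carefully chosen power of $[S]$ dictated by the exponents appearing in the first integral (\ref{eq:firstintegral}). Throughout I neglect the initial-infective contributions (the $\int_t^\infty$ terms), exactly as in the previous theorem, and I assume $[SI](t)\to 0$ as $t\to\infty$. I will work from the closed form (\ref{eq:closedformSI}) for $[SI](t)$.

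The first and most important step is to simplify the external-infection exponential in (\ref{eq:closedformSI}). From (\ref{eq:closedeqSS}) we have $\frac{d}{ds}\ln[SS](s)=-2\tau\frac{n-1}{n}\frac{[SI](s)}{[S](s)}$, hence
\begin{equation*}
e^{-\int_{t-a}^{t}\tau\frac{n-1}{n}\frac{[SI](s)}{[S](s)}\,ds}=\left(\frac{[SS](t)}{[SS](t-a)}\right)^{1/2}=\left(\frac{[S](t)}{[S](t-a)}\right)^{\frac{n-1}{n}},
\end{equation*}
where the last equality uses the first integral (\ref{eq:firstintegral}). Together with the factor $e^{-\tau a}$ produced by the $+\tau$ term in the exponent, the survival kernel in (\ref{eq:closedformSI}) factorises as $e^{-\tau a}\big([S](t)/[S](t-a)\big)^{(n-1)/n}\xi(a)$.

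The key idea is then to multiply $[SI](t)$ by $[S](t)^{-\frac{n-1}{n}}$, which exactly cancels the $[S](t)^{(n-1)/n}$ produced above and leaves an integrand in which $t$ and $a$ separate after the substitution $v=t-a$. Integrating over $t\in[0,\infty)$ and applying Fubini, the resulting double integral factorises into the product of $\int_0^\infty e^{-\tau a}\xi(a)\,da=\frac{1-\mathcal{L}[f_{\mathcal{I}}](\tau)}{\tau}$ and $\int_0^\infty \beta(v)\,[S](v)^{-\frac{n-1}{n}}\,dv$, where $\beta(v)=\tau\frac{n-1}{n}\frac{[SS](v)[SI](v)}{[S](v)}$ is the creation rate of $S$–$I$ links. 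Substituting $[SI]=-\frac1\tau\dot{[S]}$ together with (\ref{eq:firstintegral}) turns the second factor into an elementary integral of a power of $[S]$ against $\dot{[S]}$, which evaluates to a constant multiple of $1-s_\infty^{\frac{n-1}{n}}$.

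Finally, I compute the same quantity $\int_0^\infty [S]^{-\frac{n-1}{n}}[SI]\,dt$ directly: using $[SI]=-\frac1\tau\dot{[S]}$ again gives $\frac{n}{\tau}[S]_0^{1/n}\big(1-s_\infty^{1/n}\big)$. Equating the two evaluations, cancelling the common powers of $[S]_0$ and the factor $\tau/n$, and substituting the definition (\ref{eq:R0p}) of $\mathcal{R}_0^p$, yields $(n-1)\big(1-s_\infty^{1/n}\big)=\mathcal{R}_0^p\big(1-s_\infty^{\frac{n-1}{n}}\big)$, which is (\ref{eq:finalsizetheorem}) up to an overall sign. I expect the main obstacle to be the rigorous justification of the kernel simplification and the Fubini interchange; once the survival kernel is written in the separated form $e^{-\tau a}\big([S](t)/[S](t-a)\big)^{(n-1)/n}\xi(a)$, the remaining work is the separation of variables and two elementary power-law integrals.
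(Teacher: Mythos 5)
Your proof is correct, and it reaches (\ref{eq:finalsizetheorem}) by a route that is recognizably related to, but technically distinct from, the paper's. Both arguments ultimately evaluate the same quantity $\int_0^\infty [SI](t)\,[S]^{-\frac{n-1}{n}}(t)\,dt$ in two ways, and both rest on the same kernel identity: you obtain $e^{-\int_{t-a}^{t}\tau\frac{n-1}{n}\frac{[SI](s)}{[S](s)}ds}=\bigl([S](t)/[S](t-a)\bigr)^{\frac{n-1}{n}}$ from (\ref{eq:closedeqSS}) combined with the first integral (\ref{eq:firstintegral}), while the paper derives the identical relation directly from (\ref{eq:eqS}). The difference is in the execution. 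The paper works with the differential form of the $[SI]$ equation in the reduced system (\ref{eq:hom2dimsys0}): it applies variation of constants, substitutes into $\dot{[S]}=-\tau A(t)[S]^{\frac{n-1}{n}}$, solves this scalar equation for $[S]^{1/n}$, and then evaluates four iterated integrals $I_1,\dots,I_4$ (one of them a triple integral), using $\int_0^\infty f(a)e^{-\tau a}da=\mathcal{L}[f](\tau)$. You instead start from the integrated form (\ref{eq:closedformSI}), whose kernel contains the survival function $\xi$ rather than the density $f$; after normalization by $[S]^{-\frac{n-1}{n}}$ the equation becomes a genuine convolution, so a single Fubini interchange (in fact Tonelli, since the integrands are nonnegative by the positivity lemma, which settles the justification you were worried about) factorizes everything into $\int_0^\infty e^{-\tau a}\xi(a)\,da=\frac{1-\mathcal{L}[f](\tau)}{\tau}$ times an elementary power-law integral. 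The identity $\int_0^\infty e^{-\tau a}\xi(a)\,da=\frac{1}{\tau}\bigl(1-\int_0^\infty e^{-\tau a}f(a)\,da\bigr)$ is precisely what links the two computations: your single convolution term plays the role of the paper's $I_2$ and $I_3$ combined, and dropping the $\int_t^\infty$ term in (\ref{eq:closedformSI}) discards the analogues of $I_1$ and $I_4$ simultaneously, which is the same approximation the paper makes. What the paper's longer route buys is the explicit finite-time formula $[S]^{1/n}(t)=[S]_0^{1/n}-\frac{\tau}{n}\int_0^t A(u)\,du$, valid for all $t$, not just in the limit; what your route buys is brevity and a cleaner bookkeeping. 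One last remark: your concluding identity $(n-1)\bigl(1-s_\infty^{1/n}\bigr)=\mathcal{R}_0^p\bigl(1-s_\infty^{\frac{n-1}{n}}\bigr)$ is not merely equal to (\ref{eq:finalsizetheorem}) ``up to an overall sign''---multiplying both sides by $-1$ gives it exactly, so no discrepancy remains.
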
   

\begin{proof}
	The second equation of the two-dimensional system (\ref{eq:hom2dimsys0}) has the general form
	\begin{equation*}
	x'(t)=\alpha(t)-\beta(t) x(t),
	\end{equation*} 
	where 
	\begin{eqnarray*}
		\alpha(c)&=&\tau \kappa [S]^{\frac{n-2}{n}}(c)[SI](c) \nonumber \\
		&&-\int_{0}^{c}\tau \kappa [S]^{\frac{n-2}{n}}(c-a)[SI](c-a) f(a) e^{-\int_{c-a}^{c} \tau\frac{n-1}{n}\frac{[SI](s)}{[S](s)}+\tau ds} da\nonumber \\
		&& -\int_{c}^{\infty} \frac{n}{N}[S]_0 \varphi (a-c) \frac{f(a)}{\xi(a-c)} e^{-\int_{0}^c \tau\frac{n-1}{n}\frac{[SI](s)}{[S](s)}+\tau ds}da,\nonumber \\
		\beta(w)&=&\tau+\tau\frac{n-1}{n}\frac{[SI](w)}{[S](w)},\nonumber\\
		x(t)&=&[SI](t),
	\end{eqnarray*}
	and has the solution 
	\begin{equation}
	\label{eq:genSIsol}
	x(u)= e^{-\int_0^u \beta(w)dw} x(0) + \int_0^u  e^{-\int_c^u \beta(w)dw}\alpha(c) dc.
	\end{equation}
	Using (\ref{eq:eqS}), simple calculations give the relations
	\begin{eqnarray*}
		e^{-\int_{0}^{u}\tau+\tau\frac{n-1}{n}\frac{[SI](s)}{[S](s)} ds}&=& e^{-\tau u}\frac{[S]^{\frac{n-1}{n}}(u)}{[S]^{\frac{n-1}{n}}(0)},\nonumber \\
		e^{-\int_{c}^{u}\tau+\tau\frac{n-1}{n}\frac{[SI](s)}{[S](s)} ds}&=& e^{-\tau (u-c)}\frac{[S]^{\frac{n-1}{n}}(u)}{[S]^{\frac{n-1}{n}}(c)}.
	\end{eqnarray*}
	Using these relations, from (\ref{eq:genSIsol}) we get 
	\begin{eqnarray*}
		[SI](u)&=& \frac{[SI](0)}{[S]^{\frac{n-1}{n}}(0)}e^{-\tau u} [S]^{\frac{n-1}{n}}(u) + \int_{0}^{u}\tau\kappa[S]^{-\frac{1}{n}}(c) [SI](c) e^{\tau c} e^{-\tau u} [S]^{\frac{n-1}{n}}(u) dc \nonumber \\ 
		&& - \int_{0}^{u} \int_{0}^{c} \tau \kappa [S]^{-\frac{1}{n}}(c-a) [SI](c-a) f(a) e^{-\tau a} e^{\tau c} e^{-\tau u} [S]^{\frac{n-1}{n}}(u)\;da\;dc \nonumber \\
		&& - \int_{0}^{u} \int_{c}^{\infty} \frac{n}{N} [S]^{\frac{1}{n}}_0 \varphi(a-c) \frac{f(a)}{\xi(a-c)} e^{-\tau u} [S]^{\frac{n-1}{n}}(u)\;da\;dc. 	
	\end{eqnarray*}
	Then, substituting this formula into the first equation of (\ref{eq:hom2dimsys0}), we find an equation in general form 
	\begin{eqnarray}
	\label{eq:1dimsys}
	[S]'(t)=-\tau A(t) [S]^{\frac{n-1}{n}}(t),	
	\end{eqnarray}
	where 
	\begin{eqnarray}
	\label{eq:homAeq}
	A(u)&=&\frac{[SI](0)}{[S]^{\frac{n-1}{n}}(0)}e^{-\tau u}+\int_{0}^{u}\tau\kappa[S]^{-\frac{1}{n}}(c) [SI](c) e^{\tau c} e^{-\tau u}\;dc \nonumber \\ 
	&& - \int_{0}^{u} \int_{0}^{c} \tau \kappa [S]^{-\frac{1}{n}}(c-a) [SI](c-a) f(a) e^{-\tau a} e^{\tau c} e^{-\tau u}\;da\;dc \nonumber \\
	&& - \int_{0}^{u} \int_{c}^{\infty} \frac{n}{N} [S]^{\frac{1}{n}}_0 \varphi(a-c) \frac{f(a)}{\xi(a-c)} e^{-\tau u}\;da\;dc.
	\end{eqnarray}
	Solving this scalar equation, we have
	\begin{eqnarray*}
		[S]^{\frac{1}{n}}(t)=[S]^{\frac{1}{n}}_0-\frac{\tau}{n} \int_{0}^{t} A(u) du.	
	\end{eqnarray*}
	For the final size relation, we consider the following equation
	\begin{eqnarray}
	\label{eq:homfinallast}
	[S]^{\frac{1}{n}}_{\infty}=[S]^{\frac{1}{n}}_0-\frac{\tau}{n} \int_{0}^{\infty} A(u) du.	
	\end{eqnarray}
	Using the linearity of integration, we have to calculate the following four integrals on the right-hand side:
	\begin{eqnarray*}
		I_1&=&\int_{0}^{\infty} \frac{[SI](0)}{[S]^{\frac{n-1}{n}}(0)}e^{-\tau u} \;du,\\
		I_2&=&\int_{0}^{\infty} \int_{0}^{u}\tau\kappa[S]^{-\frac{1}{n}}(c) [SI](c) e^{\tau c} e^{-\tau u} \;dc \;du,\\
		I_3&=&\int_{0}^{\infty} \int_{0}^{u} \int_{0}^{c} \tau \kappa [S]^{-\frac{1}{n}}(c-a) [SI](c-a) f(a) e^{-\tau a} e^{\tau c} e^{-\tau u}da dcdu,\\
		I_4&=&\int_{0}^{\infty} \int_{0}^{u} \int_{c}^{\infty} \frac{n}{N} [S]^{\frac{1}{n}}_0 \varphi(a-c) \frac{f(a)}{\xi(a-c)} e^{-\tau u} \;da \;dc\;du.
	\end{eqnarray*}
	After lengthy calculations (see Appendix \ref{appendixfinalsizepw}), we arrive to the relation
	\begin{eqnarray}
	\label{eq:homfinalsize}
	[S]^{\frac{1}{n}}_{\infty}=[S]^{\frac{1}{n}}_0+
	\kappa \frac{1}{n-1}\left(1-\int_{0}^{\infty} f(a) e^{-\tau a} da\right)\left([S]^{\frac{n-1}{n}}_\infty-[S]^{\frac{n-1}{n}}_0\right).
	\end{eqnarray} 
	After some algebraic manipulation and substituting back the formula of $\kappa$, we have
	\begin{eqnarray}
	\label{eq:homfslap}
	\frac{s_\infty^{\frac{1}{n}}-1}{\frac{1}{n-1}}=\frac{n-1}{N}\left(1-\mathcal{L}[f](\tau)\right)[S]_0
	\left(s_\infty^{\frac{n-1}{n}}-1\right), 
	\end{eqnarray}
	where $\mathcal L[f](\tau)$ denotes the Laplace transform of $f$, the PDF of recovery time at $\tau$. 
\end{proof}
\noindent Notice that for $n\rightarrow\infty$, the relation (\ref{eq:finalsizetheorem}) takes the form $\ln(s_\infty)=\mathcal{R}_0^p(s_\infty-1)$, which is exactly the classical form of final size relations.
\section{Special cases}\label{specialcases}
In this section, we investigate some common choices for the recovery time. As we expect, if $\mathcal{I}\sim \mathrm{Exp}(\gamma)$ (i.e. the infectious period $\mathcal{I}$ is exponentially distributed), we get back the classical Markovian models. In the case of fixed recovery time, the models reduce to the systems studied in details in \cite{kissrostvizi}. We can also recover the multi-stage infection model of \cite{sherborneblyusskiss} with gamma distributed recovery time. Finally, we consider $\mathcal{I}\sim\mathrm{Uniform}(A,B)$ and write down the associated equations in a compact form. In this section we assume, that the initial infecteds are 'newborn', i.e. the initial distribution of infected nodes $\varphi(a)=[I]_0\delta(a)$, where $\delta(a)$ is the Dirac delta function. Then, 
\begin{equation}
\label{eq:newbornI}
\int_{t}^{\infty} \varphi(a-t) \frac{f(a)}{\xi(a-t)} da=[I]_0 f(t)
\end{equation}
and 
\begin{equation}
\label{eq:newbornSI}
\int_t^{\infty}\frac{n}{N} [S]_0 \varphi (a-t) e^{-\int_{0}^t \tau\frac{n-1}{n}\frac{[SI](s)}{[S](s)} +\tau ds} \frac{f(a)}{\xi(a-t)}da=\frac{n}{N} [S]_0 e^{-\int_{0}^t 
	\tau\frac{n-1}{n}\frac{[SI](s)}{[S](s)} +\tau ds} f(t).
\end{equation}
\subsection{Exponential distribution with parameter $\gamma$}\label{specialcasesexp}
The most widely used distribution in disease modelling is the exponential distribution. Both the stochastic and deterministic approaches exploit the memorylessness property to build tractable models. The resulting deterministic systems are ordinary differential equations, which are very favoured in epidemiology because of their simple structure and numerical solvability.
In the exponential case, $\xi(t)=e^{-\gamma t}$ and $f(t)=\gamma e^{-\gamma t}$. Using the assumption $\varphi(a)=[I]_0\delta(a)$, (\ref{eq:closedformI}) and $f(t)=\gamma\,\xi(t)$, from (\ref{eq:closedeqI}) we obtain
\begin{eqnarray*}
	\dot{[I]}(t)=\tau [SI](t)-\gamma [I](t),
\end{eqnarray*}
which gives the classical Markovian type pairwise equation for $[I](t)$. With similar arguments, from (\ref{eq:closedeqSI}) we obtain
\begin{eqnarray*}
	\dot{[SI]}(t)&=&\tau \frac{n-1}{n}\frac{[SS](t)[SI](t)}{[S](t)}-\tau \frac{n-1}{n}\frac{[SI](t)}{[S](t)}[SI](t)-\tau [SI](t)\nonumber \\
	&& -\gamma [SI](t).
\end{eqnarray*}
For the mean-field model (\ref{eq:closmfeq}), the same calculation gives the classical Markovian mean-field equation for $\dot{I}(t)$:
\begin{eqnarray*}
	\dot{[I]}(t)=\tau S(t) I(t)-\gamma I(t).
\end{eqnarray*} 
\subsection{Fixed recovery time $\sigma$}\label{specialcasesfixed}
In several models, it is a reasonable assumption for the infectious period to have a fixed, constant duration, e.g. for measles \cite{baileymeasles}. In the case of fixed recovery time $\sigma$, we have
\[
\xi(t)= \begin{cases}
1 & \textrm{if $0\leq t<\sigma$,} \\
0 & \textrm{if $t\geq\sigma$}, \\	
\end{cases}
\]
and
\[
f(t)=\delta(t-\sigma), 
\]
where $\delta(t)$ denotes the Dirac-delta function. Applying the fundamental property of $\delta(t)$, from (\ref{eq:closedeqI}) and $\varphi(a)=[I]_0 \delta(a)$ we have
\[ 
\dot{[I]}(t)=\tau [SI](t)-\begin{cases}
0 & \textrm{if $0\leq t<\sigma$,} \\
\tau [SI](t-\sigma) & \textrm{if $t\geq\sigma$}, \\
\end{cases}
\]
and from (\ref{eq:closedeqSI}), if $0\leq t<\sigma$, we obtain
\begin{eqnarray*}
	\dot{[SI]}(t)&=&\tau \frac{n-1}{n}\frac{[SS](t)[SI](t)}{[S](t)}-\tau \frac{n-1}{n}\frac{[SI](t)}{[S](t)}[SI](t)-\tau [SI](t).
\end{eqnarray*}
If $t>\sigma$, we get 
\begin{eqnarray*}
	\dot{[SI]}(t)&=&\tau \frac{n-1}{n}\frac{[SS](t)[SI](t)}{[S](t)}-\tau \frac{n-1}{n}\frac{[SI](t)}{[S](t)}[SI](t)-\tau [SI](t)\nonumber \\ 
	&& -\tau \frac{n-1}{n}\frac{[SS](t-\sigma)[SI](t-\sigma)}{[S](t-\sigma)} e^{-\int_{t-\sigma}^t \tau\frac{n-1}{n}\frac{[SI](s)}{[S](s)}+\tau ds}.
\end{eqnarray*}
which is exactly the same system, that was studied in detail in \cite{kissrostvizi}. The mean-field model for fixed recovery time (see \cite{kissrostvizi}) can also be derived from (\ref{eq:closedmfeqI}) using the same arguments.
\subsection{Gamma distribution with shape $K\in\mathbb{Z}^+$ and rate $K \gamma$}\label{specialcasesgamma}
The case of gamma distributed recovery time was studied in \cite{sherborneblyusskiss}. 
Using pairwise approximation with a standard closure, the authors have been able to analytically derive a number of important characteristics of disease dynamics. These included the final size of an epidemic and the epidemic threshold. Their results have shown that a higher number of disease stages, but with the same average duration of the infectious period, results in faster epidemic outbreaks with a higher peak prevalence and a larger final size of the epidemic. The pairwise model in \cite{sherborneblyusskiss} has the following equations for nodes:
\begin{eqnarray}
\label{eq:gammapairwise}
\dot{[S]}&=& -\tau \sum_{i=1}^{K} [SI_i],\nonumber\\
\dot{[I_1]}&=& \tau \sum_{i=1}^{K} [SI_i]-K \gamma [I_1] \nonumber\\
\dot{[I_j]}&=& K \gamma [I_{j-1}] -K \gamma [I_j],\;\;j=2,3,\dots K,
\end{eqnarray}
where $I_i,\;i=1,2,\dots, K$ are the infectious stages, where nodes spend an exponentially distributed time with parameter $K \gamma$. The distribution of the total infectious period is the sum of $K$ exponential distributions with parameter $K \gamma$, which gives the gamma distribution with shape $K$ and rate $K \gamma$ (thus the expected infectious period is $K \times 1/K \gamma= 1/\gamma$). Clearly, $[I](t)=\sum_{j=1}^{K}[I_i](t)$ and $[SI](t)=\sum_{i=1}^{K}[SI_i](t)$ and the sum of equations for infectious stages gives 
$$\dot{[I]}(t)=\tau [SI](t)-K \gamma [I_K](t).$$
On the other hand, using (\ref{eq:newbornI}), the PDF and survival function of Gamma distribution 
\begin{eqnarray*}
	f(a) &=& \frac{(K \gamma)^K}{(K-1)!}a^{K-1}e^{-K\gamma a},\nonumber\\
	\xi(a) &=& e^{-K \gamma a}\sum_{k=0}^{K-1}\frac{(K\gamma)^k}{k!}a^k,
\end{eqnarray*} 
and inserting into (\ref{eq:closedeqI}) and (\ref{eq:closedformI}), we have 
\begin{eqnarray}
\label{eq:gammaeqIdot}
\dot{[I]}(t)&=&\tau [SI](t)-K\gamma \left( \int_{0}^{t}\tau [SI](t-a)\frac{(K \gamma)^{K-1}}{(K-1)!}a^{K-1}e^{-K \gamma a}da- [I]_0 \frac{(K \gamma)^{K-1}}{(K-1)!}t^{K-1}e^{-K \gamma t}\right)\nonumber\\ 
\end{eqnarray}
and
\begin{eqnarray}
\label{eq:gammaeqI}
[I](t)=\sum_{k=0}^{K-1}\left(\int_{0}^{t}\tau [SI](t-a)\frac{(K \gamma)^k}{k!}a^{k}e^{-K \gamma a}da -[I]_0 \frac{(K \gamma)^k}{k!}t^{k}e^{-K \gamma t} \right).
\end{eqnarray}
These equations suggest the relations 
\begin{eqnarray}
\label{eq:gammaeqIj}
[I_j](t)=\int_{0}^{t}\tau [SI](t-a)\frac{(K \gamma)^{j-1}}{(j-1)!}a^{j-1}e^{-K \gamma a}da + [I]_0 \frac{(K \gamma)^{j-1}}{(j-1)!}t^{j-1}e^{-K \gamma t},\;j=1,2,\dots,K.
\end{eqnarray}
To show this, we consider the equations for infectious stages in (\ref{eq:gammapairwise}) as a first-order, linear differential equations with variation of constants formulae
\begin{eqnarray}
\label{eq:variationofconstantsI1}
[I_1](t) &=& [I_1](0) e^{-K \gamma t}+\int_{0}^{t} e^{-K \gamma (t-s)}\tau [SI](s)ds
\end{eqnarray}
and 
\begin{eqnarray}
\label{eq:variationofconstantsIj}
[I_j](t) &=& [I_j](0) e^{-K \gamma t}+\int_{0}^{t} e^{-K \gamma (t-s)}K \gamma [I_{j-1}](s)ds,\;\; j=2,3,\dots,K.
\end{eqnarray} 
If all infecteds are newborn, we have $[I_1](0)=[I]_0$ and $[I_2](0)=[I_3](0)=\dots=[I_K](0)=0$. Proceeding by induction yields that (\ref{eq:gammaeqIj}) satisfies (\ref{eq:variationofconstantsI1}) for $j=1$ and (\ref{eq:variationofconstantsIj}) for $j=2,3,\dots,K$ (for details, see Appendix \ref{appendixgammadistribution}). It is analogous to derive the equations for $[SI_j](t)$.
\subsection{Uniform distribution on interval $[A,B]$}\label{specialcasesuniform}
The uniform distribution is one of the most natural probability distributions and preferred in agent-based modeling \cite{laskowskiABM}, and was applied also for avian influenza \cite{yangavian}. Let the recovery time be distributed uniformly on interval $[A,B]$ (we assume $0<A<B$), i.e. 
\[ 
f(t)=\begin{cases}
\frac{1}{B-A} & \textrm{if $t\in (A,B)$,} \\
0 & \textrm{otherwise}, \\
\end{cases}
\]
and
\[ 
\xi(t)=\begin{cases}
1 & \textrm{if $t\leq A$,} \\
\frac{B-t}{B-A} & \textrm{if $t\in (A,B)$,} \\
0 & \textrm{if $t\geq B$}. \\
\end{cases}
\]
We have to study the three cases $t<A$, $A<t<B$ and $t>B$. Writing the equation for $\dot{[I]}(t)$, we have (after changing the variable):
\[ 
\dot{[I]}(t=\tau [SI](t)-\begin{cases}
0 & \textrm{if $t< A$,} \\
\int_{0}^{t-A}\frac{\tau [SI](u)}{B-A}du+\frac{[I]_0}{B-A} & \textrm{if $t\in [A,B]$,} \\
\int_{t-B}^{t-A}\frac{\tau [SI](u)}{B-A}du & \textrm{if $t> B$}. \\
\end{cases}
\]
With a more compact notation,
\begin{equation*}
\dot{[I]}(t)=\tau [SI](t) - \int_{\max(0,t-B)}^{\max(0,t-A)} \frac{\tau [SI](u)}{B-A} du - \frac{[I]_0}{B-A}\chi_{[A,B]}(t),
\end{equation*}
where $\chi_{[A,B]}(t)$ is the indicator function of interval $[A,B]$.
The same argument gives
\begin{eqnarray*}
	\dot{[SI]}(t)&=&\tau \frac{n-1}{n}\frac{[SS](t)[SI](t)}{[S](t)}-\tau \frac{n-1}{n}\frac{[SI](t)}{[S](t)}[SI](t)-\tau [SI](t)\nonumber \\ 
	&& -\int_{\max(0,t-B)}^{\max(0,t-A)} \frac{\tau}{B-A} \frac{n-1}{n}\frac{[SS](u)[SI](u)}{[S](u)} e^{-\int_{u}^t \tau\frac{n-1}{n}\frac{[SI](s)}{[S](s)}+\tau ds} du\nonumber \\
	&& -\frac{n}{N} [S]_0 e^{-\int_{0}^t \tau\frac{n-1}{n}\frac{[SI](s)}{[S](s)}+\tau ds}\frac{[I]_0}{B-A}\chi_{[A,B]}(t).
\end{eqnarray*}
For $t>B$ the model becomes a system of differential equations with distributed delays.
\section{Discussion}\label{discussion} 
While the main focus of this paper is on the rigorous derivation and analysis of the model, we have performed a number of numerical tests where the results of explicit stochastic network simulations on networks are compared to the output from the generalised pairwise model. In Fig.~\ref{fig:fig_sim}, homogeneous (or regular random) networks were considered and the average of 100 simulations is compared to the numerical solutions of mean-field (\ref{eq:closeformmf}) and pairwise (\ref{eq:closeq}) models. Several observations can be made: (a) the agreement of the simulation results with the 
numerical solution of pairwise model is excellent, and (b) the mean-field model, which largely ignores the network structure, performs poorly. This gives us great confidence that the generalised pairwise model can and will be used in different contexts as dictated by empirical or other theoretical studies. Moreover, we can observe that for different recovery time distributions sharing the same expected value the one with the smaller variance produces the larger outbreak (see also \cite{rostvizikissbiomat}).

\begin{figure}[h!]
	\centering\includegraphics[width=5.0in]{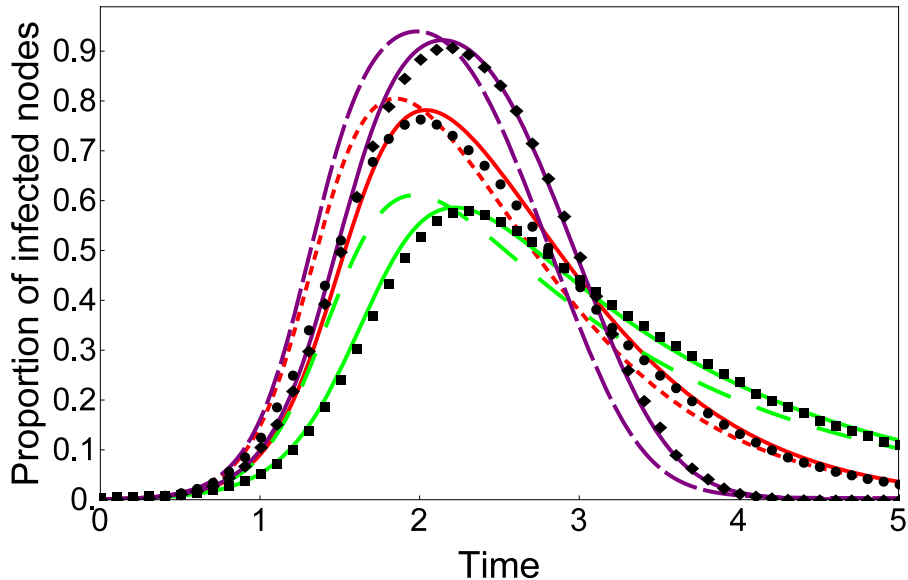}
	\caption{Stochastic and numerical experiments for non-Markovian epidemic with various recovery time distributions on homogeneous networks with $N=1000$ nodes and infection rate $\tau=0.35$. Squares, circles, diamonds show the mean of 100 simulations on random regular graphs with average degree $\langle k\rangle=15$ for exponential distribution with parameter $\lambda=\frac{2}{3}$ ($\mathrm{mean}=\frac{3}{2}$,$\mathrm{variance}=\frac{9}{4}$), gamma distribution with shape $\alpha=3$ and rate $\beta=2$ ($\mathrm{mean}=\frac{3}{2}$,$\mathrm{variance}=\frac{3}{4}$), uniform distribution on 
		interval $[a,b]=[1,2]$ ($\mathrm{mean}=\frac{3}{2}$,$\mathrm{variance}=\frac{1}{12}$), respectively. Dashed and solid lines correspond to the numerical solution of the mean-field (\ref{eq:closeformmf}) and parwise (\ref{eq:closeq}) models, respectively.} 
	\label{fig:fig_sim}
\end{figure}
The generalised pairwise model provides a description of a possible deterministic approximation of non-Markovian epidemic processes on networks. The integro-differential 
system, which describes the dynamics at the level of nodes and links, is a powerful tool for investigating the classical quantities of an $SIR$-type epidemic, such as the reproduction number and final epidemic size. Through the generalised model we have also provided a rigorous mathematical proof of the conjecture for the functional form of final size relation, which was proposed in \cite{kissrostvizi}.

The generalised model is more challenging to analyse due to its complexity but it largely relies on tools from the theory of integro-differential equations. Further extensions of the model could focus on relaxing the assumption of homogeneous networks and extend the model to networks with heterogeneous degree distribution, see for example 
\cite{NeilJTB, SK15}, or to consider modelling the situation where both the infectious and recovery processes are non-Markovian.

Several different approaches exist to model non-Markovian epidemics on networks. These are largely guided by the choice of model and variables to be tracked. Notable examples include the message passing approach, often referred to as the cavity model \cite{KN10a, WS14}, and the percolation based approach \cite{newman2002spread,KR07,Mil07, Mil08}. While the latter only offers information about the final state of the epidemic, the former
describes the temporal evolution of the epidemic. Generalisations of the pairwise model to gamma-distributed infectious periods have also been proposed and this has been developed for both homogeneous and heterogeneous networks \cite{sherborneblyusskiss,NeilJTB}. As for Markovian epidemics, we expect that many of the models mentioned above are complementary and offer different perspectives on the time evolution or final state of the epidemic. We expect that many of the non-Markovian models will in fact be equivalent under appropriately chosen initial conditions and appropriate 
averaging. With the model proposed we wanted to emphasise opportunities to frame problems and models of network epidemics in more rigorous mathematical terms and use existing mathematical theory to enhance our understanding of stochastic processes on networks and their average behaviour as captured by mean-field models.

%


\clearpage

\section{Appendix}\label{appendix}
\subsection{Calculations for model derivation} \label{appendixmodelderiv}
Repeating Eq. (\ref{eq:eqS}) and applying the moment-closure formula (\ref{eq:closure}) to Eq. (\ref{eq:eqSS}), we have
\begin{eqnarray*}
	\dot{[S]}(t)&=&-\tau [SI](t),\nonumber \\
	\dot{[SS]}(t)&=&-2\tau \frac{n-1}{n} \frac{[SS](t) [SI](t)}{[S](t)}.\nonumber
\end{eqnarray*}
Using $[I](t)=\int_{0}^{\infty}i(t,a)da$, from Eq. (\ref{eq:eqi}) we obtain
\begin{eqnarray}
\label{eq:Idot}
\dot{[I]}(t)&=&\int_{0}^{\infty}\frac{\partial}{\partial t} i(t,b) db=\int_{0}^{\infty}\left(-h(b) i(t,b)-\frac{\partial}{\partial b}i(t,b)\right)db\nonumber \\
&=& -\int_{0}^{\infty} h(b) i(t,b) db - \left(i(t,\infty)-i(t,0)\right)\nonumber \\
&=&-\int_{0}^{\infty} h(b) i(t,b) db - i(t,\infty) + i(t,0).
\end{eqnarray}
Solving the first-order linear PDE (\ref{eq:eqi}) along characteristic lines, we obtain
\begin{equation*}
\begin{split}
i(t,a)=
&\begin{cases}
i(t-a,0) e^{-\int_{0}^{a} h(b) db}, \quad \mathrm{if}\; t > a; \\
i(0,a-t) e^{-\int_{a-t}^{a} h(b) db}, \quad \mathrm{if}\; t \leq a .
\end{cases}
\end{split}
\end{equation*}
Plugging (\ref{eq:initi}) and (\ref{eq:initi2}) into the solution above, we have
\begin{equation}
\label{eq:ita}
\begin{split}
i(t,a)=
&\begin{cases}
\tau [SI](t-a) e^{-\int_{0}^{a} h(b) db}, \quad \mathrm{if}\; t > a; \\
\varphi(a-t) e^{-\int_{a-t}^{a} h(b) db}, \quad \mathrm{if}\; t \leq a .
\end{cases}
\end{split}
\end{equation}
Applying this formula for $[I](t)=\int_{0}^{\infty}i(t,a)da$, we find
\begin{eqnarray}
\label{eq:Ipde}
[I](t)=\int_{0}^{t}\tau [SI](t-a) e^{-\int_{0}^{a} h(b) db} da+\int_{t}^{\infty}\varphi(a-t) e^{-\int_{a-t}^{a} h(b) db} da.
\end{eqnarray}
Finally, using that along the characteristic lines, $i(t,\infty)=i(0,\infty)=\varphi(\infty)=0$ from the assumption, substituting (\ref{eq:ita}) and the boundary condition (\ref{eq:initi}) into (\ref{eq:Idot}), we get 
\begin{eqnarray}
\label{eq:ddeI}
\dot{[I]}(t)&=&\tau [SI](t) - \int_{0}^{t} \tau [SI](t-a) h(a) e^{-\int_{0}^{a} h(b) db} da \nonumber \\
&&- \int_{t}^{\infty} \varphi(a-t) h(a) e^{-\int_{a-t}^{a} h(b) db} da .
\end{eqnarray}
Using the definition and properties of hazard function, we can deduce the following formulae:
\begin{subequations}
	\begin{align}
	e^{-\int_{0}^{a}h(b) db}&=\frac{\xi(a)}{\xi(0)}=\xi(a),\label{eq:hazardprop1}\\
	e^{-\int_{a-t}^{a}h(b) db}&=\frac{\xi(a)}{\xi(a-t)}, \label{eq:hazardprop2}\\
	h(a) e^{-\int_{0}^{a}h(b) db}&=f(a), \label{eq:hazardtosurv1}\\
	h(a) e^{-\int_{a-t}^{a}h(b) db}&=\frac{f(a)}{\xi(a-t)}.\label{eq:hazardtosurv2}
	\end{align}
\end{subequations} 
Applying these formulae to Eq.(\ref{eq:Ipde}) and (\ref{eq:ddeI}), we have 
\begin{eqnarray}
\label{eq:Iformula}
[I](t)=\int_{0}^{t}\tau [SI](t-a) \xi(a) da+\int_{t}^{\infty}\varphi(a-t) \frac{\xi(a)}{\xi(a-t)} da,
\end{eqnarray}
and
\begin{equation*}
\dot{[I]}(t)=\tau [SI](t) - \int_{0}^{t} \tau [SI](t-a) f(a) da - \int_{t}^{\infty} \varphi(a-t) \frac{f(a)}{\xi(a-t)} da.
\end{equation*}
To compute the equation for $[SI](t)$, we follow the calculation process above. First, applying (\ref{eq:closure2}) to Eq. (\ref{eq:eqSi}), we get 
\begin{eqnarray}
\label{eq:assumpSi}
\left(\frac{\partial}{\partial t}+\frac{\partial}{\partial a}\right)Si(t,a)
&=& -\frac{\tau(n-1)}{n}\frac{[SI](t)}{[S](t)} Si(t,a) -(\tau+h(a)) Si(t,a).
\end{eqnarray} 
Using $[SI](t)=\int_{0}^{\infty}Si(t,a)da$, from Eq. (\ref{eq:assumpSi}) we find
\begin{eqnarray}
\label{eq:SIeqwithSi}
\dot{[SI]}(t)&=&\int_{0}^{\infty}\frac{\partial}{\partial t}Si(t,a)da\nonumber \\
&=&\int_{0}^{\infty}\left(-\frac{\tau(n-1)}{n}\frac{[SI](t)}{[S](t)}Si(t,a)\right)da-\int_{0}^{\infty}(\tau+h(a))Si(t,a)da -\int_{0}^{\infty}\frac{\partial}{\partial a}Si(t,a)da\nonumber \\
&=& -\tau\frac{n-1}{n}\frac{[SI](t)}{[S](t)}[SI](t) -\tau [SI](t)-\int_{0}^{\infty}h(a) Si(t,a)da -Si(t,\infty)+Si(t,0).
\end{eqnarray}  
We want to express the variable $Si(t,a)$ as a function of classical network variables. To achieve this, let us consider the following first-order PDE:
$$\left(\frac{\partial}{\partial t}+\frac{\partial}{\partial a}\right)x(t,a)=-f(t) x(t,a)-g(a)x(t,a)$$
with boundary conditions 
$$x(t,0)=\phi(t), \quad x(0,a)=\psi(a).$$
Solving along the characteristic lines $t-a=c$, we find that
\begin{equation}
\label{eq:pdesol0}
\begin{split}
x(t,a)=
&\begin{cases}
\phi(t-a) e^{-\int_{t-a}^t f(s) ds} e^{-\int_0^a g(b) db}, \quad \mathrm{if}\; t > a; \\
\psi(a-t) e^{-\int_{0}^t f(s) ds} e^{-\int_{a-t}^a g(b) db}, \quad \mathrm{if}\; t \leq a.
\end{cases}
\end{split}
\end{equation}
In our case, 
$x(t,a)=Si(t,a)$, 
$f(t)=\tau\frac{n-1}{n}\frac{[SI](t)}{[S](t)}$, 
$g(a)=\tau+h(a)$,
$\phi(t)=\tau \frac{n-1}{n}\frac{[SS](t)[SI](t)}{[S](t)}$, (from closure approximation (\ref{eq:closure2})) and $\psi(a)=\frac{n}{N} [S]_0 \varphi (a)$, hence from (\ref{eq:pdesol0}) we get
\begin{equation}
\label{eq:Si(t,a)form}
\begin{split}
Si(t,a)=
&\begin{cases}
\tau \frac{n-1}{n}\frac{[SS](t-a)[SI](t-a)}{[S](t-a)} e^{-\int_{t-a}^t \tau\frac{n-1}{n}\frac{[SI](s)}{[S](s)} ds} e^{-\int_0^a \tau+h(b) db}, \quad \mathrm{if}\; t > a; \\
\frac{n}{N} [S]_0 \varphi (a-t) e^{-\int_{0}^t \tau\frac{n-1}{n}\frac{[SI](s)}{[S](s)} ds} e^{-\int_{a-t}^a \tau+h(b) db}, \quad \mathrm{if}\; t \leq a .
\end{cases}
\end{split}
\end{equation}
Again, along the characteristic lines $Si(t,\infty)=Si(0,\infty)=\chi(\infty)=0$.
Putting (\ref{eq:Si(t,a)form}) into $[SI](t)=\int_{0}^{\infty}Si(t,a)da$, we obtain
\begin{eqnarray}
\label{eq:SIpde}
[SI](t)&=&\int_0^t \tau \frac{n-1}{n}\frac{[SS](t-a)[SI](t-a)}{[S](t-a)} e^{-\int_{t-a}^t \tau\frac{n-1}{n}\frac{[SI](s)}{[S](s)} ds} e^{-\int_0^a \tau+h(b) db} da\nonumber\\
&&+
\int_t^{\infty}\frac{n}{N} [S]_0 \varphi (a-t) e^{-\int_{0}^t \tau\frac{n-1}{n}\frac{[SI](s)}{[S](s)} ds} e^{-\int_{a-t}^a \tau+h(b) db} da.
\end{eqnarray}
If we substitute (\ref{eq:Si(t,a)form}) back to Eq.(\ref{eq:SIeqwithSi}), we derive 
\begin{eqnarray}
\label{eq:SIeqfromPDE}
\dot{[SI]}(t)&=&\tau \frac{n-1}{n}\frac{[SS](t)[SI](t)}{[S](t)} -\frac{\tau(n-1)}{n}\frac{[SI](t)}{[S](t)}[SI](t)-\tau [SI](t)\nonumber \\ 
&& -\int_0^t \tau \frac{n-1}{n}\frac{[SS](t-a)[SI](t-a)}{[S](t-a)} e^{-\int_{t-a}^t \tau\frac{n-1}{n}\frac{[SI](s)}{[S](s)} ds} e^{-\int_0^a \tau+h(b) db} h(a) da\nonumber \\
&& -\int_t^{\infty}\frac{n}{N} [S]_0 \varphi (a-t) e^{-\int_{0}^t \tau\frac{n-1}{n}\frac{[SI](s)}{[S](s)} ds} e^{-\int_{a-t}^a \tau+h(b) db} h(a) da.
\end{eqnarray}
Applying the formulae (\ref{eq:hazardprop1})-(\ref{eq:hazardtosurv2}) for Eq. (\ref{eq:SIpde}) and (\ref{eq:SIeqfromPDE}), we have
\begin{eqnarray}
\label{eq:SIformula}
[SI](t)&=&\int_0^t \tau \frac{n-1}{n}\frac{[SS](t-a)[SI](t-a)}{[S](t-a)} e^{-\int_{t-a}^t \tau\frac{n-1}{n}\frac{[SI](s)}{[S](s)}+\tau ds} \xi(a) da\nonumber\\
&&+
\int_t^{\infty}\frac{n}{N} [S]_0 \varphi (a-t) e^{-\int_{0}^t \tau\frac{n-1}{n}\frac{[SI](s)}{[S](s)}+\tau ds} \frac{\xi(a)}{\xi(a-t)} da,
\end{eqnarray}
and
\begin{eqnarray*}
	\dot{[SI]}(t)&=&\tau \frac{n-1}{n}\frac{[SS](t)[SI](t)}{[S](t)}-\tau \frac{n-1}{n}\frac{[SI](t)}{[S](t)}[SI](t)-\tau [SI](t)\nonumber \\ 
	&& -\int_0^t \tau \frac{n-1}{n}\frac{[SS](t-a)[SI](t-a)}{[S](t-a)} e^{-\int_{t-a}^t \tau\frac{n-1}{n}\frac{[SI](s)}{[S](s)}+\tau ds} f(a)da\nonumber \\
	&& -\int_t^{\infty}\frac{n}{N} [S]_0 \varphi (a-t) e^{-\int_{0}^t \tau\frac{n-1}{n}\frac{[SI](s)}{[S](s)} +\tau ds} \frac{f(a)}{\xi(a-t)}da.
\end{eqnarray*}
\subsection{Calculation for final size relation (\ref{eq:finalsizemftheorem})} \label{appendixfinalsizemftheorem}
In this section, we provide the derivation from (\ref{eq:beforemffs}) to (\ref{eq:aftermffs}):
\begin{eqnarray*}
	\ln\left(\frac{S(\infty)}{S(0)}\right)&=&-\left(\tau \frac{n}{N}\right)^2\int_0^{\infty}\int_{0}^{u} S(u-a)I(u-a) \xi(a) da du\nonumber \\
	&=& -\left(\tau \frac{n}{N}\right)^2\int_0^{\infty}\int_{0}^{u} S(v)I(v) \xi(u-v) dv du\nonumber \\
	&=&-\left(\tau \frac{n}{N}\right)^2\int_0^{\infty}\int_{v}^{\infty} S(v)I(v) \xi(u-v) du dv\nonumber \\
	&=&-\left(\tau \frac{n}{N}\right)^2\int_0^{\infty} S(v)I(v) \left[\int_{v}^{\infty}\xi(u-v) du \right]dv\nonumber \\
	&=&-\left(\tau \frac{n}{N}\right)^2\int_0^{\infty} S(v)I(v) \left[\int_{0}^{\infty}\xi(p) dp \right]dv\nonumber \\
	&=&\tau \frac{n}{N}\left[\int_{0}^{\infty}\xi(p) dp \right]\left(S(\infty)-S(0)\right)\nonumber \\
	&=&\tau \frac{n}{N}\left[\int_{0}^{\infty}\int_{p}^{\infty}f(q)dq dp \right]\left(S(\infty)-S(0)\right)\nonumber \\
	&=&\tau \frac{n}{N}\left[\int_{0}^{\infty}\int_{0}^{q}f(q)dp dq \right]\left(S(\infty)-S(0)\right)\nonumber \\
	&=&\tau \frac{n}{N}\left[\int_{0}^{\infty}q f(q) dq \right]\left(S(\infty)-S(0)\right)\nonumber \\
	&=&\tau \frac{n}{N}\mathbb{E}(\mathcal{I})\left(S(\infty)-S(0)\right).
\end{eqnarray*}
\subsection{Calculation for final size relation (\ref{eq:finalsizetheorem})} \label{appendixfinalsizepw}
Now, we compute the four integrals appear on the right-hand side of Eq. (\ref{eq:homfinallast}). For the first integral, we have
\begin{eqnarray*}
	I_1 &=& \int_{0}^{\infty} \frac{[SI](0)}{[S]^{\frac{n-1}{n}}(0)}e^{-\tau u} du= \frac{[SI](0)}{[S]^{\frac{n-1}{n}}(0)}\left[ \frac{e^{-\tau u}}{-\tau}\right]_{0}^{\infty}=  \frac{[SI](0)}{[S]^{\frac{n-1}{n}}(0)} \frac{1}{\tau}.
\end{eqnarray*}
After some algebraic manipulation, we obtain the following expression for the second integral $I_2$:
\begin{eqnarray*}
	I_2&=&\int_{0}^{\infty} \int_{0}^{u}\tau\kappa[S]^{-\frac{1}{n}}(c) [SI](c) e^{\tau c} e^{-\tau u} dc du = \int_{0}^{\infty} \int_{c}^{\infty}\tau\kappa[S]^{-\frac{1}{n}}(c) [SI](c) e^{\tau c} e^{-\tau u} du dc \nonumber \\
	&=& \int_{0}^{\infty} \tau\kappa[S]^{-\frac{1}{n}}(c) [SI](c) e^{\tau c} \left[ \frac{e^{-\tau u}}{-\tau}\right]_{c}^{\infty} dc = \int_{0}^{\infty} \tau \kappa[S]^{-\frac{1}{n}}(c) [SI](c) e^{\tau c} \frac{e^{-\tau c}}{\tau} dc\nonumber \\
	&=& -\frac{1}{\tau}\kappa \int_{0}^{\infty} [S]^{-\frac{1}{n}}(c)\dot{[S]}(c) dc=-\frac{1}{\tau}\kappa \frac{n}{n-1}\left([S]^{\frac{n-1}{n}}_\infty-[S]^{\frac{n-1}{n}}_0\right).	
\end{eqnarray*}
The most challenging one is the third integral $I_3$:
\begin{eqnarray*}
	I_3 &=&\int_{0}^{\infty} \int_{0}^{u} \int_{0}^{c} \tau \kappa [S]^{-\frac{1}{n}}(c-a) [SI](c-a) f(a) e^{-\tau a} e^{\tau c} e^{-\tau u} da dc du \nonumber \\
	&=& \int_{0}^{\infty} \int_{c}^{\infty} \int_{0}^{c} \tau \kappa [S]^{-\frac{1}{n}}(c-a) [SI](c-a) f(a) e^{-\tau a} e^{\tau c} e^{-\tau u} da du dc \nonumber \\
	&=& \int_{0}^{\infty} \int_{0}^{c} \int_{c}^{\infty}  \tau \kappa [S]^{-\frac{1}{n}}(c-a) [SI](c-a) f(a) e^{-\tau a} e^{\tau c} e^{-\tau u} du da dc \nonumber \\
	&=& \int_{0}^{\infty} \int_{0}^{c} \tau \kappa [S]^{-\frac{1}{n}}(c-a) [SI](c-a) f(a) e^{-\tau a} e^{\tau c} \left[ \frac{e^{-\tau u}}{-\tau}\right]_{c}^{\infty} da dc \nonumber \\
	&=& \frac{1}{\tau} \int_{0}^{\infty} \int_{0}^{c} \tau \kappa [S]^{-\frac{1}{n}}(c-a) [SI](c-a) f(a) e^{-\tau a} da dc \nonumber \\
	&=& -\frac{1}{\tau} \kappa \int_{0}^{\infty} f(a) e^{-\tau a}  \left[\frac{[S]^{\frac{n-1}{n}}(c-a)}{\frac{n-1}{n}}\right]_{a}^{\infty} da \nonumber \\
	&=& -\frac{1}{\tau} \kappa \frac{n}{n-1}\left([S]^{\frac{n-1}{n}}_\infty-[S]^{\frac{n-1}{n}}_0\right)\int_{0}^{\infty} f(a) e^{-\tau a} da.
\end{eqnarray*}
For the fourth integral, we compute 
\begin{eqnarray*}
	I_4 &=& \int_{0}^{\infty} \int_{0}^{u} \int_{c}^{\infty} \frac{n}{N} [S]^{\frac{1}{n}}_0 \varphi(a-c) \frac{f(a)}{\xi(a-c)} e^{-\tau u} da dc du \nonumber \\
	&=& \int_{0}^{\infty} \int_{c}^{\infty} \int_{c}^{\infty} \frac{n}{N} [S]^{\frac{1}{n}}_0 \varphi(a-c) \frac{f(a)}{\xi(a-c)} e^{-\tau u} da du dc \nonumber \\
	&=& \int_{0}^{\infty} \int_{c}^{\infty} \int_{c}^{\infty} \frac{n}{N} [S]^{\frac{1}{n}}_0 \varphi(a-c) \frac{f(a)}{\xi(a-c)} e^{-\tau u} du da dc \nonumber \\
	&=& \int_{0}^{\infty} \int_{c}^{\infty} \frac{n}{N} [S]^{\frac{1}{n}}_0 \varphi(a-c) \frac{f(a)}{\xi(a-c)} \left[ \frac{e^{-\tau u}}{-\tau}\right]_{c}^{\infty} da dc \nonumber \\
	&=& \frac{1}{\tau}\int_{0}^{\infty} \int_{c}^{\infty} \frac{n}{N} [S]^{\frac{1}{n}}_0 \varphi(a-c) \frac{f(a)}{\xi(a-c)} e^{-\tau c} da dc.
\end{eqnarray*}
Having a small amount of initial infecteds (i.e. $[I](0)=\int_{0}^{\infty}\varphi(a)da<<1$), the integrals $I_1$ and $I_4$ are approximately zero.
\subsection{Calculations for proof of equivalence in the case of gamma distribution} \label{appendixgammadistribution}
Here, we obtain the formula (\ref{eq:gammaeqIj}) by induction from variation of constants formulae (\ref{eq:variationofconstantsI1}) and (\ref{eq:variationofconstantsIj}). Letting $j=1$ in (\ref{eq:gammaeqIj}), we have
\begin{eqnarray*}
	[I_1](t)=\int_{0}^{t}\tau [SI](t-a)e^{-K \gamma a}da + [I]_0 e^{-K \gamma t},
\end{eqnarray*}
which comes directly from (\ref{eq:variationofconstantsI1}). Assuming that (\ref{eq:gammaeqIj}) holds for $1<j$, we prove that it holds for $j+1$. Indeed, we can do the following elaboration:
\begin{eqnarray*}
	[I_{j+1}](t) &=& \!\!\![I_{j+1}](0) e^{-K \gamma t}+\int_{0}^{t} e^{-K \gamma (t-s)}K \gamma [I_{j}](s)ds\nonumber\\
	&=&\!\!\!\int_{0}^{t}\!\! e^{-K \gamma (t-s)}\!K \gamma \!\!\left(\int_{0}^{s}\tau [SI](s-a)\frac{(K \gamma)^{j-1}}{(j-1)!}a^{j-1}e^{-K \gamma a}da\!+\![I]_0 \frac{(K \gamma)^{j-1}}{(j-1)!}s^{j-1}\!e^{-K \gamma s}\right)ds\nonumber\\
	&=&\!\!\!\int_{0}^{t}\!\!\frac{(K \gamma)^{j}}{(j-1)!} e^{-K \gamma (t-s)} \left( \int_{0}^{s}\tau [SI](s-a) a^{j-1}e^{-K \gamma a}da\;ds\right)\!\!+ [I]_0 \frac{(K \gamma)^{j}}{(j-1)!}e^{-K \gamma t}\int_{0}^{t}s^{j-1}ds\nonumber\\
	&=&\!\!\!\int_{0}^{t}\frac{(K \gamma)^{j}}{(j-1)!} e^{-K \gamma (t-s)} \left( \int_{0}^{s}\tau [SI](u) (s-u)^{j-1}e^{-K \gamma (s-u)}du\right) ds + [I]_0 \frac{(K \gamma)^{j}}{j!}t^je^{-K \gamma t}\nonumber\\
	&=&\!\!\!\int_{0}^{t}\frac{(K \gamma)^{j}}{(j-1)!}e^{-K \gamma (t-u)}\tau [SI](u)\left( \int_{u}^{t} (s-u)^{j-1}ds\right)du  + [I]_0 \frac{(K \gamma)^{j}}{j!}t^je^{-K \gamma t}\nonumber\\
	&=&\!\!\!\int_{0}^{t}\tau [SI](u)\frac{(K \gamma)^{j}}{j!}(t-u)^{j}e^{-K \gamma (t-u)}du  + [I]_0 \frac{(K \gamma)^{j}}{j!}t^j e^{-K \gamma t}\nonumber\\
	&=&\!\!\!\int_{0}^{t}\tau [SI](t-a)\frac{(K \gamma)^{j}}{j!}a^{j}e^{-K \gamma a}da  + [I]_0 \frac{(K \gamma)^{j}}{j!}t^je^{-K \gamma t}.\nonumber\\	
\end{eqnarray*}
\subsection{Implementation of numerical simulations}
As a validation of our models, we implemented an event-based stochastic algorithm for simulating the non-Markovian SIR process with arbitrary recovery time. In the code, the waiting times for possible events are generated from appropriate distributions. After selecting the smallest waiting time, the associated event (infection or recovery) happens, and according to the type of event, the necessary updates are executed. \\
For the numerical solution of integro-differential equations (\ref{eq:closeq}) and (\ref{eq:closeformmf}), we developed a numerical scheme based on collocation method. The numerical methods in \cite{brunner} were adapted to the reduced, but highly nonlinear pairwise system, having the following general form:
\begin{eqnarray}
\label{eq:main}
x'(t) &=& f(x(t),y(t)),\nonumber\\
y'(t) &=& g(x(t),y(t))-\int_0^t F(t-a,x(a),y(a)) e^{-\int_a^t G(x(s),y(s)) ds}da\\
&&-H\left(t,\int_0^t G(x(s),y(s))ds\right)\nonumber.
\end{eqnarray}
A collocation solution $u_h$ to a functional equation on an interval $I$ is an element from some finite-dimensional function space (the collocation space) which satisfies the equation on an appropriate finite subset of points in $I$ (the set of collocation points), whose cardinality essentially matches the dimension of the collocation space. For integro-differential equations the collocation equations are not yet in a form amenable to numerical computation, due to the presence of the memory term given by the integral operator, thus another discretisation step, based on appropriate quadrature approximations, is necessary to obtain the fully discretised collocation scheme. The presence of the double integral in (\ref{eq:main}) makes our scheme different from standard methods.

\end{document}